\DeclareRobustCommand{\bigO}{%
  \text{\usefont{OMS}{cmsy}{m}{n}O}%
}
\newtheorem{theorem}{Theorem}
\newtheorem{lemma}{Lemma}
\newtheorem{proposition}{Proposition}
\title{Terminating Poincar{\'e} asymptotic expansion of the Hankel transform of entire exponential type functions}
\author{Nathalie Liezel R. Rojas and Eric A. Galapon \\
{\normalsize Theoretical Physics Group,  National Institute of Physics } \\
{\normalsize University of the Philippines Diliman } \\
{\normalsize Quezon City, 1101 Philippines}}
\date{\today}
\begin{document}
\maketitle

\begin{abstract}
We perform an asymptotic evaluation of the Hankel transform, $\int_0^{\infty}J_{\nu}(\lambda x) f(x)\mathrm{d}x$, for arbitrarily large $\lambda$ of an entire exponential type function, $f(x)$, of type $\tau$ by shifting the contour of integration in the complex plane. Under the situation that $J_{\nu}(\lambda x)f(x)$ has an odd parity with respect to $x$ and the condition that the asymptotic parameter $\lambda$ is greater than the type $\tau$, we obtain an exactly terminating Poincar{\'e} expansion without any trailing subdominant exponential terms. That is the Hankel transform evaluates exactly into a polynomial in inverse $\lambda$ as $\lambda$ approaches infinity. 
\end{abstract}

\hspace{1em} {\small Keywords: Poincar{\'e} asymptotic expansion, Hankel integral transform, exponential type function}

\section{Introduction}
The Hankel transform of order $\nu$ of a function $f(x)$ is given by
\begin{align}\label{eqn:HankelInt}
    I (\lambda) = \int_0^{\infty} f(x) \, J_{\nu} (\lambda x) \mbox{d} x,
\end{align}
where $J_{\nu} (z)$ is the Bessel function of the first kind. As $\lambda$ grows large, this class of integrals exhibit strong oscillations such that analytical and numerical methods become insufficient in solving them \cite{gabutti1985asymptotic}, which leads into considering their asymptotic estimates in order to obtain useful results. One representation of this integral is its Poincar{\'e} asymptotic expansion (PAE) which is given by
\cite{wong2001asymptotic,mackinnon1972asymptotic,zayed1982asymptotic,lopez2007asymptotic,lopez2010asymptotic}
\begin{equation}\label{eqn:poin}
\int_0^{\infty} f(x) J_{\nu}(\lambda x) \mbox{d}x \sim \frac{1}{2}\sum_{s=0}^{\infty} \frac{f^{(s)}(0)}{s!} \frac{\Gamma\left(\frac{1}{2}(\nu+s+1)\right)}{\Gamma\left(\frac{1}{2}(\nu-s+1)\right)}\left(\frac{2}{\lambda}\right)^{s+1}, \hspace{2mm} \lambda \rightarrow \infty ,
\end{equation}
provided that $f(x)$ is infinitely differentiable at the origin. This  expansion can be derived using the Mellin transform method \cite{wong2001asymptotic}. In certain situations, the PAE fails to provide a complete information when there are missed out exponentially subdominant terms in the expansion or it fails to be meaningful at all when the integral happens to be exponentially small for which case each term in the PAE vanishes. 
Recovering these exponentially small terms contribute significantly to the numerical accuracy and exactification of the PAE of the Hankel transform \cite{galapon2014exactification}.

Obtaining the subdominant exponentially small terms has been explored in few works. Frenzen and Wong \cite{frenzen1985note} performed an asymptotic estimate of the Hankel transform where the Bessel function $J_{\nu} (z)$ of order $\nu=0$, by shifting the contour of integration in the complex plane, and showed that the exponentially small terms can be obtained along the path that is parallel to the real line. Gabutti \cite{gabutti1985asymptotic} also considered obtaining an asymptotic estimate of the Hankel transform for a general case of nonzero order of $\nu$ by considering the kernel of the Hankel transform as Laguerre polynomials and showed results that have exponentially small behaviour. Further, Gabutti and Lepora \cite{gabutti1987novel} considered another case where the order of $\nu$ is a half-integer and used Laplace transforms for the function $f(x)$ in the Hankel transform. They were able to obtain asymptotic expansions of the Hankel transform which are exponentially small. Although the above mentioned works were only asymptotic estimates of the Hankel transform in which some of the results are not applicable to all cases of integer orders of $\nu$, it can be seen in the results that the exponentially small terms are present in the subdominant terms of the asymptotic expansion. But the numerical use of these asymptotic expansions without considering its error terms can end up having inaccurate results \cite{olver1997asymptotics}.

Further study of the Hankel transform was conducted by Galapon and Martinez \cite{galapon2014exactification} in which they solved the same form as in equation \eqref{eqn:poin} using the distribution theory approach. In their study, they derived the exponentially small terms that are subdominant to the PAE and investigated specific cases and examples depending on the order of $\nu$ which resulted to exactifying the value of the Hankel integral.
In one case involving half-integer orders of $\nu$, they showed that the exponentially small terms in the asymptotic expansion vanished and the obtained result becomes a terminating PAE which were able to provide an exact value of the considered integral.
For the case when $\nu=0$, an example of the Hankel integral was also solved. The obtained results show that the PAE becomes zero and the remaining exponentially small terms in the expansion yield the exact value of the integral. In another case for any positive integer $\nu$, they showed that the obtained result includes the PAE of the integral, which terminated, along with the exponentially small terms, and this result achieved the exact value of the integral. This result was obtained by considering the integrand of the Hankel transform be odd, such as when $f(x)$ is odd and $J_{\nu} (\lambda x)$ is even, or when $f(x)$ is even and $J_{\nu} (\lambda x)$ is odd.
From these presented results, we now reflect these findings in our work.
We aim to obtain a terminating Poincar{\'e} asymptotic expansion of the Hankel transform in equation \eqref{eqn:poin} for integer orders of $\nu$ where the exponentially small terms do not contribute.

In this paper, we consider the function $f(x)$ in the Hankel transform to be an entire exponential function of type $\tau$ and perform a shifting of the contour integration in the complex plane, following the method presented in \cite{frenzen1985note,galapon2016internal}. A function is said to be an entire function of exponential type $\tau$ if it has a complex extension $f(z)$ in the complex plane which satisfies the asymptotic condition that for sufficiently large values of $|z|$ and for any arbitrarily small $\epsilon>0$, the inequality $|f(z)| \leq   e^{(\tau + \epsilon) |z|}$ holds \cite{galapon2016internal,boas1954entire}.
We find that when $f(x)$ is an entire exponential type function and the integrand, $f(x) J_{\nu} (\lambda x)$ is odd, the resulting series expansion terminates without trailing exponentially small terms, provided that the condition $\lambda>\tau$ is satisfied. 
In particular, we find that the resulting expansion terminates to a polynomial in inverse order of $\lambda$ as $\lambda \to \infty$.

The rest of the paper is organized as follows. In Section 2, we present theorems for solving for the integral in equation \eqref{eqn:HankelInt} where $f(x)$ is an entire exponential type function, and show that the obtained result is a terminating PAE . In  Section 3, we apply these results to exponential type functions involving gamma functions and show that the results exactify the value of the Hankel transforms.

%%%%%%%%%%%%%%%%%%%%%%%%%%%%%%%%%%%%%%%%%%%%%%%%%%%%%%%%%%%%%%%%%%%%
%%%%%%%%%%%%%%%%%%%%%%%%%%%%%%%%%%%%%%%%%%%%%%%%%%%%%%%%%%%%%%%%%%%%
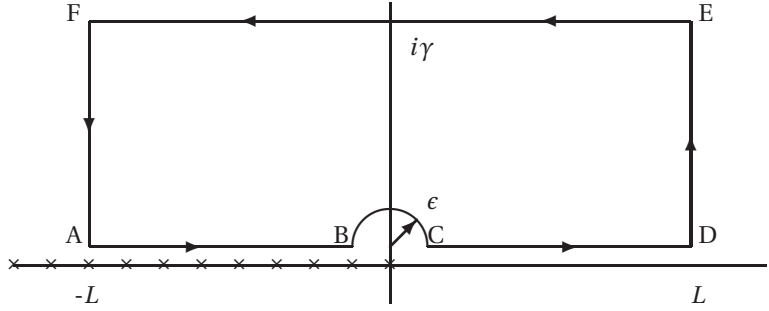
\begin{figure} 
\begin{center}
\setlength{\unitlength}{1cm}
\begin{picture}(12,4)
  \thicklines
  \put(1,0){\line(1,0){10}} % the x-axis
  \put(6,-0.5){\line(0,1){4}} % the y-axis
  \multiput(0.88,-0.07)(0.5,0){11}{$\times$} % the branch cut
  % the path of integration
  \put(2,0.25){\vector(1,0){1.5}}
  \put(3.5,0.25){\line(1,0){2}}
  \put(6,0.25){\oval(1,1)[t]} % small semicircle
  \put(6,0.25){\vector(1,1){0.36}}
  \put(6.5,0.75){$\epsilon$}
  \put(6.5,0.25){\vector(1,0){2}}
  \put(8.5,0.25){\line(1,0){1.5}}
  \put(10,0.25){\vector(0,1){1.5}}
  \put(10,1.75){\line(0,1){1.5}}
  \put(2,3.25){\vector(0,-1){1.5}}
  \put(2,1.75){\line(0,-1){1.5}}
  \put(10,3.25){\vector(-1,0){2}}
  \put(4,3.25){\line(-1,0){2}}
  \put(8,3.25){\vector(-1,0){4}}
  % the labels
  \put(1.8,-0.5){-$L$}
  \put(10,-0.5){$L$}
  \put(6.25,2.8){$i \gamma$}
  \put(1.7,0.3){A}
  \put(5.25,0.3){B}
  \put(6.5,0.3){C}
  \put(10.1,0.3){D}
  \put(10.1,3.25){E}
  \put(1.7,3.25){F}
\end{picture}
\end{center}
\caption{The path of integration.}
\label{fig:Figure1}
\end{figure}
%%%%%%%%%%%%%%%%%%%%%%%%%%%%%%%%%%%%%%%%%%%%%%%%%%%%%%%%%%%%%%%%%%%%%%%%%%%%%%%%%%%%%%%%%%%

\section{Hankel transform of entire exponential type function}
 
We now obtain the Hankel transform of entire exponential type function $f(x)$ in equation \eqref{eqn:HankelInt} where $J_{\nu} (\lambda x)$ is of integer order $\nu$ for a natural number $\nu$ and $\lambda>0$. The integral converges under the condition $f(x)\to 0$ as $|x|\to\infty$. Additionally, $|J_{\nu} (x)| \leq 1$ for $\nu \geq 0$ and the integral in \eqref{eqn:HankelInt} has the bound $|I| \leq \int_0^\infty |f(x)| \, \text{d} x <\infty$ when $f(x)$ is absolutely integrable in the interval $[0,\infty)$.
When the integrand is odd, we recognize two cases: when $f(x)$ is odd and $J_\nu (\lambda x)$ is even, and when $f(x)$ is even and $J_\nu (\lambda x)$ is odd.

\subsection{Case 1}
\begin{theorem}\label{theorem1}
Let $f(z)$ be an entire function of exponential type $\tau$ whose reduction in the real line, $f(x)$, be absolutely integrable in $[0,\infty), i.e.  \int_{0}^{\infty} |f(x)| \mbox{d}x < \infty$.
Let $J_{2l} (\lambda x)$ be the Bessel function of the first kind of order $2l$ for all non-negative integer $l$.
Let $f(z)$ has the expansion
\begin{equation}\label{eqn:theorem1function}
f(z)=z^{2m+1}\sum_{j=0}^{\infty} a_j z^{2j},
\end{equation} 
where $a_0\neq 0$, $m+l \geq 0$ for integer $m$. If $\lambda > \tau$, then
\begin{eqnarray}\label{eqn:theorem1Int}
\int_0^{\infty}f(x) J_{2l}(\lambda x)\mbox{d}x=0,\;\; l<m+1,
\end{eqnarray}
and
\begin{eqnarray}\label{eqn:theorem1IntSum}
\int_0^{\infty}\!\!\!\!\! f(x) J_{2l}(\lambda x)\mbox{d}x=\frac{1}{2}\!\sum_{j=0}^{l-m-1} a_j \frac{(l+m+j)!}{(l-m-j-1)!} \left(\frac{2}{\lambda}\right)^{2m+2j+2}, \; l\geq m+1 .
\end{eqnarray}
\end{theorem}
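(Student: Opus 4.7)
The plan is to evaluate $I=\int_0^\infty f(x)J_{2l}(\lambda x)\,\mathrm{d}x$ by applying Cauchy's theorem to $f(z)H^{(1)}_{2l}(\lambda z)$ on the closed contour in Figure~\ref{fig:Figure1}. The small semicircle about the origin avoids the only singularity of the integrand in the closed upper half-plane, namely the logarithmic branch point of $H^{(1)}_{2l}$ at $z=0$, so the full contour integral vanishes and the proof reduces to identifying each of the six pieces.

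On the segment $A\to B$ (taken just above the negative real axis) I would use the integer-order connection formula $H^{(1)}_{2l}(w e^{i\pi})=-H^{(2)}_{2l}(w)$ together with the oddness $f(-x)=-f(x)$ to recast the piece as $\int_\epsilon^L f(x)H^{(2)}_{2l}(\lambda x)\,\mathrm{d}x$. Added to the $C\to D$ contribution, this sums to $2\int_\epsilon^L f(x)J_{2l}(\lambda x)\,\mathrm{d}x$, which tends to $2I$ in the double limit $\epsilon\downarrow 0$, $L\uparrow\infty$.

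The remaining three sides of the rectangle must be shown to vanish. Because $f$ is entire of exponential type $\tau$ and is $L^1$ on $\mathbb{R}$, the Plancherel--P\'olya/Bernstein inequality supplies the half-plane bound $|f(x+iy)|\le M e^{\tau|y|}$, and the standard Hankel asymptotic $H^{(1)}_{2l}(\lambda z)\sim\sqrt{2/(\pi\lambda z)}\,e^{i(\lambda z-l\pi-\pi/4)}$ for $0<\arg z<\pi$ combines with it to give $|f(z)H^{(1)}_{2l}(\lambda z)|=O(|z|^{-1/2}e^{(\tau-\lambda)\Im z})$ in the upper half-plane. Under the hypothesis $\lambda>\tau$ this forces $D\to E$ and $F\to A$ to vanish as $L\to\infty$, and the top $E\to F$ to vanish as $\gamma\to\infty$; the strict inequality $\lambda>\tau$ is used precisely here.

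Everything therefore reduces to computing the semicircle $B\to C$ as $\epsilon\to 0$. The small-$z$ expansion of $H^{(1)}_{2l}(\lambda z)$ consists of a polar part $-\tfrac{i}{\pi}\sum_{k=0}^{2l-1}\tfrac{(2l-k-1)!}{k!}(\lambda z/2)^{2k-2l}$, a $\log(\lambda z/2)\,J_{2l}(\lambda z)$ contribution, and a regular analytic remainder; multiplied by $f(z)=\sum_{j\ge 0}a_j z^{2m+2j+1}$ the product contains only odd integer powers of $z$, some carrying a $\log z$. On the semicircle $z=\epsilon e^{i\theta}$ the elementary identity $\int_\pi^0 e^{i(n+1)\theta}\,\mathrm{d}\theta=0$ for every even $n+1\neq 0$ annihilates all non-log odd powers except $z^{-1}$, and the condition $m+l\ge 0$ guarantees every $\log$-bearing term has exponent $\ge 1$, so those contribute $O(\epsilon^{n+1}\log\epsilon)\to 0$. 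The surviving $z^{-1}$ residue is extracted by matching $2m+2j+2k-2l+1=-1$, which gives $k=l-m-j-1$ with the admissible range $0\le j\le l-m-1$. The semicircle contributes $-i\pi$ times this residue sum, and solving the Cauchy identity for $I$ yields the stated finite expansion when $l\ge m+1$, and $I=0$ when $l<m+1$ because no admissible $k$ exists. The most delicate step will be the Laurent-plus-log bookkeeping near the origin, which must be handled carefully enough to verify that no stray $z^{-1}$ contribution appears outside the claimed index range.
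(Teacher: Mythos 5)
Your proposal is correct and follows essentially the same route as the paper: the same contour, the same reduction of the negative-axis piece via the integer-order connection formula (your $H^{(1)}_{2l}(we^{i\pi})=-H^{(2)}_{2l}(w)$ is equivalent to the paper's $2J_{2l}(z)=H^{(1)}_{2l}(z)-H^{(1)}_{2l}(ze^{i\pi})$), the same $\lambda>\tau$ vanishing estimates, and the same extraction of the surviving $z^{-1}$ term on the semicircle, where your residue matching $k=l-m-j-1$ with $0\le j\le l-m-1$ is exactly the paper's Kronecker delta $\delta_{m+1-l+j+s,0}$; your explicit handling of the logarithmic terms is in fact more careful than the paper's. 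One small repair: for the top side $EF$ the pointwise bound $O(|z|^{-1/2}e^{(\tau-\lambda)\Im z})$ is not integrable over the full horizontal line, so you should instead combine $|H^{(1)}_{2l}(\lambda(x+i\gamma))|\le\sqrt{2/(\pi\lambda\gamma)}\,e^{-\lambda\gamma}$ with the $L^{1}$ form of the Plancherel--P\'olya inequality, $\int_{-\infty}^{\infty}|f(x+i\gamma)|\,\mathrm{d}x\le e^{\tau\gamma}\int_{-\infty}^{\infty}|f(x)|\,\mathrm{d}x$, which is what the paper does and which you already have at hand.
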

\begin{proof}
Let
\begin{align}\label{eqn:IntI1}
    I_1(\lambda) \, = \, \int_0^{\infty}f(x) J_{2l}(\lambda x)\mbox{d}x . 
\end{align}
We take the extension of this integral in the negative real line. To do this, we use the relationship between the Hankel function and the Bessel function provided by the connection formula \cite[p.75, 3.62.5]{watsontreatise},
\begin{eqnarray}\label{connection}
H_{\nu}^{(1)}(z e^{i \alpha \pi}) = e^{- i \alpha \nu \pi} H_{\nu}^{(1)}(z)-2 e^{- i \nu \pi} \frac{\sin (\alpha \nu\pi)}{\sin (\nu\pi)} J_{\nu}(z),
\end{eqnarray}
where $H_{\nu}^{(1)}(z)$ is a Hankel function of order $\nu$ and has a branch cut along the negative real axis $(-\infty,0]$ where it has a logarithmic singularity at $z=0$. We consider the case when $\nu=2l$ and let $\alpha=1$ in equation \eqref{connection} to get
\begin{eqnarray}\label{case1}
2 J_{2l}(z)= H^{(1)}_{2l}(z)-H^{(1)}_{2l}(z e^{i\pi}),
\end{eqnarray}
for all $l=0,1,2,\dots$. Applying this relationship into equation \eqref{eqn:IntI1} and note that in this case, $f(x)$ must be odd leads to the principal value integral
\begin{eqnarray} \label{eqn:theorem1PVI}
\int_0^{\infty}f(x) J_{2l}(\lambda x)\mbox{d}x =\frac{1}{2} PV  \int_{-\infty}^{\infty} f(x) H^{(1)}_{2l}(\lambda x)\, \mbox{d}x .
\end{eqnarray}

To determine the principal value, we consider the following contour integral
\begin{align}
     \oint_C f(z) H^{(1)}_{2l}(\lambda z)\, \mbox{d}z = 0,
\end{align}
where the path is shown in Figure \ref{fig:Figure1}, and the integral equals zero because no poles are enclosed within the contour. We choose the direction of the integration to be counterclockwise. Along the path-$AB$ and path-$CD$, we parametrize $z=xe^{i \alpha}$ where $ \alpha=0$ then
\begin{align}\label{eqn:pathAB}
\int_{AB} =  \int_{-L}^{-\epsilon} \,f(x) H^{(1)}_{2l}(\lambda x)\, \mbox{d}x, \quad \int_{CD} =  \int_{\epsilon}^{L} \,f(x) H^{(1)}_{2l}(\lambda x)\, \mbox{d}x.
\end{align}
Along the semicircle, the parametrization is $z=\epsilon e^{i \theta}$ so that we have
\begin{align}
    \int_{BC} = \, \int_{\pi}^0 \, f(\epsilon e^{i \theta }) \, H_{2l}^{(1)} (\lambda \epsilon e^{i \theta}) \,i \epsilon e^{i \theta}  \, \text{d} \theta.
\end{align}
For the path-$DE$ and path-$FA$, we have the following integrals,
\begin{align}\label{eqn:pathDEFA}
\int_{DE} = \int_{0}^{\gamma} \,f(L+iy) H^{(1)}_{2l}(\lambda (L+iy))\, i \mbox{d}y, \quad \int_{FA} = \int_{\gamma}^{0} \,f(-L+iy) H^{(1)}_{2l}(\lambda (-L+iy))\, i \mbox{d}y.
\end{align}
And for the path-$EF$, we have
\begin{align}\label{eqn:pathEF}
\int_{EF} = \int_{L}^{-L} f(x+i\gamma) H^{(1)}_{2l} (\lambda(x+i\gamma))\, \mbox{d}x.
\end{align}

Let us now estimate the integral along the path-$DE$. The Hankel function has the asymptotic expansion \cite[p.920,8.451.3, p.921,8.451.8]{gradstein2007zwillinger} 
\begin{align}\label{eqn:HankelAsymExp}
    H_\nu^{(1)} (z) = \, \sqrt{\frac{2}{\pi z}} \, e^{-i (z-\frac{\pi \, \nu}{2} - \frac{\pi}{4})} \, \left[ \sum_{k=0}^{n-1} \, \frac{(-1)^k}{(2 i z)^k k!} \frac{\Gamma(\nu+k+\frac{1}{2})}{\Gamma(\nu-k+\frac{1}{2})} \, + \, \Theta_1 \frac{(-1)^n}{(2 i z)^n k!} \frac{\Gamma(\nu+n+\frac{1}{2})}{\Gamma(\nu-n+\frac{1}{2})} \right], \,\, |z| \to \infty
\end{align}
for $Re(\nu)>-\frac{1}{2}$ and $|\text{Arg}(z)|<\pi$. For $Im(z)\geq 0$, the expansion has the upper bound \begin{align}\label{eqn:HankelBound}
    |H_\nu^{(1)} (z)| \leq \Bigg| \sqrt{\frac{2}{\pi z}} \, e^{- i z} \Bigg|.
\end{align}
Using this inequality in the upper half plane, we have the bound
\begin{eqnarray}
\left|\int_{DE} \right|&\leq & \int_0^{\gamma} |f(L+iy)| \,  |H^{(1)}_{2l} (\lambda (L+iy)) |\, \mbox{d}y \leq \sqrt{\frac{2}{\pi \lambda}} \int_0^{\gamma} |f(L+i y)| \, \mbox{d}y \label{eqn:DEestimate}.
\end{eqnarray}
Since $f(x)$ vanishes as $|x|\to \infty$, the integral $\int_0^{\gamma} |f(L+iy)| \mbox{d}y$ vanishes as $L\rightarrow\infty$ \cite{boas1954entire}. Then the integral along the path-$DE$ vanishes in the limit as $L\rightarrow\infty$.
The same result holds for the integral along the path-$FA$ by applying Lemma 3.2 in \cite{schmeisser2007approximation}. Estimating the integral along path-$EF$ as $L\rightarrow\infty$, we have the bound
\begin{eqnarray}
\left|\int_{EF}\right| \leq  \int_{-\infty}^{\infty} |f(x+i\gamma)| |H^{(1)}_{2l} (\lambda(x+i\gamma))|\, \mbox{d}x \leq  \sqrt{\frac{2}{\pi \lambda}}e^{-\lambda\gamma} \int_{-\infty}^{\infty} \frac{|f(x+i\gamma)|}{(x^2+\gamma^2)^{\frac{1}{4}}} \, \mbox{d}x . 
\end{eqnarray}

In the right hand side of the equation above, the integrand is an exponential type function thus we apply the inequality $\int_{-\infty}^{\infty} |g(x+iy)|^p \mbox{d}x \leq e^{p \tau |y| } \int_{-\infty}^{\infty}   |g(x)|^p \mbox{d}x$ where $g(z)$ is an exponential function of type $\tau$ \cite{boas1954entire}. For $p=1$, we then have
\begin{eqnarray}
\left|\int_{EF}\right| 
\leq  \, \sqrt{\frac{2}{\pi \lambda } }e^{-\gamma(\lambda-\tau)} \int_{-\infty}^{\infty} \frac{|f(x)|}{(x^2+\gamma^2)^{1/4}}\, \mbox{d}x
\leq M \sqrt{\frac{2}{\pi \lambda } }e^{-\gamma(\lambda-\tau)} .
\end{eqnarray}
When $\lambda > \tau$, the bound on the integral along $EF$ can be made arbitrarily small by translating $EF$ further corresponding to an arbitrarily large $\gamma'>\gamma$. This implies that the integral along $EF$ vanishes when $\lambda > \tau$. The vanishing of the integral depended on the the fact that $f(z)$ is an exponential function of type $\tau$.

For $\lambda > \tau$, we now have the following
\begin{align}
    0 \, = \, \oint_C f(z) H^{(1)}_{2l}(\lambda z)\, \mbox{d}z = & \, \int_{-L}^{-\epsilon} \,f(x) H^{(1)}_{2l}(\lambda x)\, \mbox{d}x + \, \int_{\pi}^0 \, f(\epsilon e^{i \theta }) \, H_{2l}^{(1)} (\lambda \epsilon e^{i \theta}) \,i \epsilon e^{i \theta}  \, \text{d} \theta + \int_{\epsilon}^{L} \,f(x) H^{(1)}_{2l}(\lambda x)\, \mbox{d}x.
\end{align}
Rearranging the terms above and taking the limit as $\epsilon\to 0$ and $L\to \infty$, we write
\begin{align}
     \label{eqn:epsilonPV} \lim_{\substack{\epsilon\to 0 \\ L\to \infty}} \left[ \int_{-L}^{-\epsilon} \, f(x) H^{(1)}_{2l}(\lambda x)\, \mbox{d}x + \int_{\epsilon}^{L} \,f(x) H^{(1)}_{2l}(\lambda x)\, \mbox{d}x \right]= - \lim_{\epsilon\to 0} \, \int_{\pi}^0 \, f(\epsilon e^{i \theta }) \, H_{2l}^{(1)} (\lambda \epsilon e^{i \theta}) \,i \epsilon e^{i \theta}  \, \text{d} \theta .
\end{align}
The left-hand side of equation \eqref{eqn:epsilonPV} is now the principal value integral in equation \eqref{eqn:theorem1PVI}. 
The value of this integral must now come from the integral around the small semicircle. That is,
\begin{eqnarray}\label{integralnew}
PV \int_{-\infty}^{\infty} \,f(x) H^{(1)}_{2l}(\lambda x)\, \mbox{d}x  = - \, \lim_{\epsilon\to 0} \, \int_{\pi}^0 \, f(\epsilon e^{i \theta }) \, H_{2l}^{(1)} (\lambda \epsilon e^{i \theta}) \,i \epsilon e^{i \theta}  \, \text{d} \theta .
\end{eqnarray}
To evaluate the integral on the right hand side of the equation \eqref{integralnew}, we write $H_{2l}^{(1)}(z)$ in such a way that its singularity at $z=0$ is manifested. We make use of the relationship $H_{\nu}^{(1)}(z)=J_{\nu}(z) + i Y_{\nu}(z)$, where $Y_{\nu}(z)$ is a Neumann function. When $\nu=n$, an integer, we have \cite[p.911, 8.403.2]{gradstein2007zwillinger}
\begin{align}
Y_n(z) = & \frac{2}{\pi} J_{n}(z) \, \ln \left(\frac{z}{2} \right) - \frac{1}{\pi} \,  \sum_{s=0}^{n-1} \frac{(n-s-1)!}{s!} \left(\frac{z}{2}\right)^{2s-n} -\frac{1}{\pi} \sum_{s=0}^{\infty} (-1)^s \frac{1}{s!(s+n)!} \left(\frac{z}{2}\right)^{n+2s} \left[\psi(s+1)+\psi(s+n+1)\right].
\end{align}
Then for every positive integer $n$,
\begin{eqnarray}\label{eqn:hankelsum}
H^{(1)}_{n}(z)=-\frac{i}{\pi}\sum_{s=0}^{n-1} \frac{(n-s-1)!}{s!}\left(\frac{z}{2}\right)^{2s-n} + \phi(z),
\end{eqnarray}
where $\phi(z)$ carries the logarithmic singularity of $H_{n}^{(1)}(z)$.
Now for this case, the function $f(x)$ must be odd and is entire, we can write $f(x)$ as in equation \eqref{eqn:theorem1function} where $a_0\neq 0$ and $m$ is a natural number. Using equations \eqref{eqn:theorem1function} and \eqref{eqn:hankelsum} for $n=2l$ and $z= \epsilon e^{i \theta }$, and substitute back into equation \eqref{integralnew} results to
\begin{eqnarray}\label{eqn:PVint1}
PV \int_{-\infty}^{\infty} \,f(x) H^{(1)}_{2l}(\lambda x)\, \mbox{d}x = -\frac{1}{\pi} \,  \sum_{j=0}^{\infty} \, \sum_{s=0}^{2l-1}a_j\frac{(2l-s-1)!}{s!} \left(\frac{\lambda}{2}\right)^{2s-2l} \, \lim_{\epsilon\rightarrow 0} \, \epsilon^{2j+2m+2-2l+2s} \,  \int_{\pi}^0 e^{2i\theta(m +1 -l +j+s)}\, \mbox{d}\theta,
\end{eqnarray}
where the term involving $\phi(z)$ has been dropped because it vanishes as $\epsilon\rightarrow 0$. On the right-hand side of equation \eqref{eqn:PVint1}, the integral is equal to zero when $m +1 -l +j+s \neq 0$ but it is nonzero when $m +1 -l +j+s = 0$, so that
\begin{align}
\label{eqn:ExpInt}
    \int_{\pi}^0 e^{2i\theta(m +1 -l +j+s)}\, \mbox{d}\theta = - \pi \delta_{m +1 -l +j+s,0}.
\end{align}

Therefore, the integral on the right-hand side of equation \eqref{eqn:PVint1} is zero if $l<m+1$ for all $j=0,1,2,\dots$ and $s=0,1,\dots, 2l-1$. Consequently, the principal value in \eqref{eqn:theorem1PVI} is equal to zero, which proves equation \eqref{eqn:theorem1Int}.

On the other hand, the integral on the right-hand side of equation \eqref{eqn:PVint1} becomes nonzero when $l \geq m+1$ so that applying the result \eqref{eqn:ExpInt} into equation \eqref{eqn:PVint1}, we are then left with
\begin{eqnarray}
PV \int_{-\infty}^{\infty} \,f(x) H^{(1)}_{2l}(\lambda x)\, \mbox{d}x = \,  \sum_{j=0}^{\infty} \, a_j \frac{(l+m+j)!}{(l-m-1-j)!}  \left(\frac{\lambda}{2}\right)^{-2m-2j-2}.
\end{eqnarray}   
Simplifying the equation above, we see that the terms becomes zero for values of
$j=l-m, l-m+1, l-m+2,\dots$ thus the series terminates at $j=l-m-1$ so that the desired principal value is given by
\begin{eqnarray}
PV \int_{-\infty}^{\infty} \,f(x) H^{(1)}_{2l}(\lambda x)\, \mbox{d}x = \, \sum_{j=0}^{l-m-1} \, a_j \, \frac{(l+m+j)!}{(l-m-1-j)!} \left(\frac{2}{\lambda}\right)^{2m+2j+2}, \quad l \geq m+1. 
\end{eqnarray}  
We substitute this result back into equation \eqref{eqn:theorem1PVI}, thereby proving equation \eqref{eqn:theorem1IntSum} and completing the proof of Theorem 1.
\end{proof}

\textit{Remark:} When $\lambda < \tau$, the contribution from the path-$EF$ does not vanish and exponentially small terms are obtained from this path.

\subsection{Case 2}

\begin{theorem}\label{theorem3}
Let $h(z)$ be an entire function of exponential type $\tau$ whose reduction in the real line, $h(x)$, be absolutely integrable in $[0,\infty), i.e.  \int_{0}^{\infty} |h(x)| \mbox{d}x < \infty$. Let $J_{2l+1} (\lambda x)$ be the Bessel function of the first kind of order $2l+1$ for all non-negative integer $l$. Let $h(z)$ has the expansion
\begin{equation}\label{eqn:Theorem3ExpFn}
h(z)=z^{2m}\sum_{j=0}^{\infty} b_j \, z^{2j}
\end{equation}
where $b_0\neq 0$, $m+l \geq 0$ for integer $m$. If $\lambda > \tau$, then
\begin{eqnarray} \label{eqn:Theorem2Int}
\int_0^{\infty}h(x) J_{2l+1}(\lambda x)\mbox{d}x=0,\;\; l<m ,
\end{eqnarray}
and
\begin{eqnarray} \label{eqn:theorem2IntSum}
\int_0^{\infty}h(x) J_{2l+1}(\lambda x)\mbox{d}x=\frac{1}{2}\sum_{j=0}^{l-m} b_j \frac{(l+m+j)!}{(l-m-j)!} \left(\frac{2}{\lambda}\right)^{2m+2j+1}, \;\; l\geq m .
\end{eqnarray}
\end{theorem}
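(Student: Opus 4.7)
The plan is to mirror the proof of Theorem \ref{theorem1} almost verbatim, adjusting for the two structural differences of Case 2: (i) the integrand parity is now produced by an even entire function $h$ multiplied by an odd Bessel function $J_{2l+1}$, and (ii) the connection formula \eqref{connection} at odd integer order $\nu = 2l+1$ degenerates in a different way than at even order, so the analogue of \eqref{case1} must be recomputed. Setting $\nu = 2l+1$ and $\alpha = 1$ in \eqref{connection} and handling the indeterminate ratio $\sin(\alpha\nu\pi)/\sin(\nu\pi)$ via a limit, one obtains $2 J_{2l+1}(z) = H^{(1)}_{2l+1}(z) + H^{(1)}_{2l+1}(z e^{i\pi})$. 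Substituting $y = -x$ in the second term and using the evenness of $h$, I will show that the Hankel transform can still be written as
\begin{equation*}
\int_0^{\infty} h(x) J_{2l+1}(\lambda x)\, \mbox{d}x = \frac{1}{2} PV \int_{-\infty}^{\infty} h(x) H^{(1)}_{2l+1}(\lambda x)\, \mbox{d}x,
\end{equation*}
so the same contour from Figure \ref{fig:Figure1} applies. The plus sign from the odd-order connection formula is precisely compensated by the fact that $h$ is even (instead of $f$ odd), leading to the identical PV form.

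Next, I would transplant the estimates along paths $DE$, $FA$, and $EF$ directly from the proof of Theorem \ref{theorem1}: they rely only on the Hankel bound \eqref{eqn:HankelBound}, the decay of $h$ on the real line, and the exponential type inequality of Boas, none of which use the order parity. Thus the whole contour collapses, under the hypothesis $\lambda > \tau$, to the limit of the small semicircle integral around the origin, giving
\begin{equation*}
PV \int_{-\infty}^{\infty} h(x) H^{(1)}_{2l+1}(\lambda x)\, \mbox{d}x = -\lim_{\epsilon \to 0} \int_{\pi}^{0} h(\epsilon e^{i\theta}) H^{(1)}_{2l+1}(\lambda \epsilon e^{i\theta})\, i \epsilon e^{i\theta}\, \mbox{d}\theta .
\end{equation*}

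I would then expand $H^{(1)}_{2l+1}$ near the origin via \eqref{eqn:hankelsum} with $n = 2l+1$, and plug in the series expansion \eqref{eqn:Theorem3ExpFn} for $h(\epsilon e^{i\theta})$. The resulting double sum over $j$ and $s \in \{0,1,\dots,2l\}$ carries a factor $\epsilon^{2(m+j+s-l)} e^{2i\theta(m+j+s-l)}$, and by the analogue of \eqref{eqn:ExpInt} only the index combinations with $m + j + s = l$ survive, with the angular integral contributing $-\pi$. This forces $s = l - m - j$, which is admissible only when $l \geq m$ and $0 \leq j \leq l - m$. When $l < m$ the constraint has no solution and the integral vanishes, establishing \eqref{eqn:Theorem2Int}. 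Otherwise one gets
\begin{equation*}
\frac{1}{2} \sum_{j=0}^{l-m} b_j \frac{(2l-(l-m-j))!}{(l-m-j)!} \left(\frac{2}{\lambda}\right)^{2m+2j+1},
\end{equation*}
and simplifying $2l - (l-m-j) = l+m+j$ yields \eqref{eqn:theorem2IntSum}.

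The main obstacle is the bookkeeping of the odd-order connection formula: one must verify carefully that (a) the limit of $\sin(\alpha\nu\pi)/\sin(\nu\pi)$ at $\nu = 2l+1$, $\alpha = 1$ equals $1$, so that the $J_{\nu}$ term does not drop out, and (b) the resulting plus sign, combined with the evenness of $h$ (which flips the sign when performing $y=-x$), still produces the clean PV formulation. Apart from this algebraic subtlety, the rest of the argument is structurally identical to Theorem \ref{theorem1}, with the shift in exponent ($2m$ vs.\ $2m+1$) and in Hankel order ($2l+1$ vs.\ $2l$) accounting for the factor $(2/\lambda)^{2m+2j+1}$ and the cutoff $l \geq m$ rather than $l \geq m+1$.
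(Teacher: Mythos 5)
Your proposal is correct and follows essentially the same route as the paper: reduce to a principal-value integral of $h(x)H^{(1)}_{2l+1}(\lambda x)$ over the whole real line, collapse the contour of Figure \ref{fig:Figure1} using the Hankel bound and the exponential-type inequality, and extract the surviving term $s=l-m-j$ from the small semicircle. Your explicit verification of the odd-order connection formula $2J_{2l+1}(z)=H^{(1)}_{2l+1}(z)+H^{(1)}_{2l+1}(ze^{i\pi})$ and the sign bookkeeping with the evenness of $h$ is a detail the paper leaves implicit, and it checks out.
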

\begin{proof}
The proof is identical to that of Theorem 1, but we now consider the case when $\nu$ is odd so that we have the integral
\begin{align}
    I_2 (\lambda) = \int_0^{\infty} \, h(x) \, J_{2l+1}(\lambda x) \, \text{d}x.
\end{align}
Using the connection formula in equation \eqref{connection} for $\alpha=1$ and $\nu=2l+1$ and note that $h(x)$ must be an even function, we can then write the integral $I_2$ using its principal value,
\begin{align} \label{eqn:theorem2PVI}
    \int_0^{\infty} \, h(x) \, J_{2l+1}(\lambda x) \, \text{d}x = \frac{1}{2} PV  \int_{-\infty}^{\infty} \, h(x) H_{2l+1}^{(1)}(\lambda x) \, \text{d}x.
\end{align}

To evaluate the right-hand side of the equation above, we perform an integration for the contour integral \begin{align}
     \oint_C h(z) H^{(1)}_{2l+1}(\lambda z)\, \mbox{d}z = 0,
\end{align}
along the same path shown in Figure \ref{fig:Figure1}. We note that there are no poles in the upper half plane, so the integral is equal to zero. The same parametrizations from Theorem 1 are also done for each path, following the same direction. Using the same estimate that we got in equation \eqref{eqn:DEestimate}, the contributions along the path-$DE$ and path-$FA$ vanish in the limit as $L\to \pm \infty$ respectively \cite{boas1954entire,schmeisser2007approximation}. The same estimate is also done for the contribution along the path-$EF$ which is equal to zero provided that $\lambda$ is arbitrarily large than the type $\tau$. 

Then for $\lambda > \tau$ and applying the limit as $\epsilon\to 0$ and $L\to \infty$, we then have
\begin{align}
     \label{eqn:epsilonPV2} \lim_{\substack{\epsilon\to 0 \\ L\to \infty}} \left[ \int_{-L}^{-\epsilon} \, h(x) H^{(1)}_{2l+1}(\lambda x)\, \mbox{d}x + \int_{\epsilon}^{L} \, h(x) H^{(1)}_{2l+1}(\lambda x)\, \mbox{d}x \right]= - \lim_{\epsilon\to 0} \, \int_{\pi}^0 \, h(\epsilon e^{i \theta }) \, H_{2l+1}^{(1)} (\lambda \epsilon e^{i \theta}) \,i \epsilon e^{i \theta}  \, \text{d} \theta. 
\end{align}
The left-hand side of equation \eqref{eqn:epsilonPV2} now corresponds to the principal value integral in equation \eqref{eqn:theorem2PVI}.
The remaining contribution is then obtained from the semicircle which is given by
\begin{align}\label{integralnew2}
PV \int_{-\infty}^{\infty} \, h(x) H^{(1)}_{2l+1}(\lambda x)\, \mbox{d}x  = - \, \lim_{\epsilon\to 0} \, \int_{\pi}^0 \, h(\epsilon e^{i \theta }) \, H_{2l+1}^{(1)} (\lambda \epsilon e^{i \theta}) \, i \epsilon e^{i \theta}  \, \text{d} \theta .
\end{align}
We now write the Hankel function $H_\nu^{(1)} (z)$ using equation \eqref{eqn:hankelsum} for $\nu=2l+1$ and $h(x)$ as in equation \eqref{eqn:Theorem3ExpFn}, then
\begin{align}\label{eqn:PVint2}
    PV \int_{-\infty}^{\infty} \, h(x) H^{(1)}_{2l+1}(\lambda x)\, \mbox{d}x = -\frac{1}{\pi} \, \sum_{j=0}^\infty \, \sum_{k=0}^{2l} \, b_j  \frac{\,  (2l-k)!}{k!} \left( \frac{\lambda}{2} \right)^{2k-2l-1} \, \lim_{\epsilon\to 0} \, \epsilon^{2j+2m-2l+2k} \, \int_{\pi}^0 \, e^{2 i \theta (j+m-l+k)} \, \text{d}\theta.
\end{align}
Here, the integral on the right hand side of the equation above becomes zero when $m-l+j+k \neq 0$ but it is nonzero when $m-l+j+k=0$ so that
\begin{align}\label{eqn:ExpInt2}
    \int_{\pi}^0 \, e^{2 i \theta (j+m-l+k)} \, \text{d}\theta = \, - \pi \, \delta_{m-l+j+k,0}.
\end{align}

For all $j=0,1,2,\dots$ and $k=0,1,\dots, 2l$, the integral on the right-hand side of \eqref{eqn:PVint2} is zero when $l<m$, thus the principal value in equation \eqref{eqn:theorem2PVI} becomes zero. With this, equation \eqref{eqn:Theorem2Int} is proven.

The integral on the right-hand side of \eqref{eqn:PVint2} then becomes nonzero when $l \geq m$. Substituting the result \eqref{eqn:ExpInt2} into equation \eqref{eqn:PVint2}, we now get the following
\begin{align}
    PV \int_{-\infty}^{\infty} \, h(x) H^{(1)}_{2l+1}(\lambda x)\, \mbox{d}x = \, \sum_{j=0}^\infty \, b_j \, \frac{(l+m+j)! \, }{(l-m-j)!} \, \left( \frac{\lambda}{2} \right)^{-2m-2j-1}.
\end{align}
It can be seen here that for $j=l-m+1,l-m+2,l-m+3,\dots$, the value of these terms become zero such that the series terminates at $j=l-m$. We then arrive with
\begin{align}
    PV \int_{-\infty}^{\infty} \, h(x) H^{(1)}_{2l+1}(\lambda x)\, \mbox{d}x = \, \sum_{j=0}^{l-m} \, b_j \, \frac{(l+m+j)! \, }{(l-m-j)!} \, \left( \frac{2}{\lambda} \right)^{2m+2j+1}, \quad l \geq m.
\end{align}
By using the result above to equation \eqref{eqn:theorem2PVI}, then we have proven equation \eqref{eqn:theorem2IntSum} and the proof of Theorem 2 is complete.
\end{proof}

\section{Hankel transform of the reciprocal of a product of gamma functions}

Let us explore examples of Hankel transforms. We consider the reciprocal of the product of two gamma functions which has the form $ 1 / (\Gamma (\alpha + \beta z) \Gamma (\alpha - \beta z)) $. This is a special case of the general result obtained in \cite{napalkov2006entire}.
We first show that this is an exponential type function.

\begin{lemma}
For sufficiently large $|z|$, there exists a number $C>0$ such that
\begin{equation}\label{eqn:Gammalemma}
\frac{1}{|\Gamma (\alpha + \beta z) \Gamma (\alpha - \beta z)|} \leq \frac{C e^{2 \alpha}}{2 \pi \, |\beta z|^{2 \alpha -1}} \, e^{\beta \pi \, \text{Im}(z)}, \quad |z| \to \infty,
\end{equation}
for $\alpha,\beta>0$ in the sector $|\text{Arg} \, z| <\pi$. Then,  $ 1 / (\Gamma (\alpha + \beta z) \Gamma (\alpha - \beta z)) $ is an exponential function of type $\beta \pi$.
\end{lemma}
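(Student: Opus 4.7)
The plan is to read the required estimate off Stirling's asymptotic expansion applied separately to each of the two gamma factors, and then combine them, paying close attention to how the imaginary parts of the two logarithms conspire to produce the exponential factor. I would begin from
\begin{equation*}
\log\Gamma(w) = \bigl(w-\tfrac{1}{2}\bigr)\log w - w + \tfrac{1}{2}\log(2\pi) + O(|w|^{-1}),\qquad |w|\to\infty,\ |\text{Arg}\,w|<\pi-\delta,
\end{equation*}
and take real parts to get $\log|\Gamma(w)| = (\text{Re}\,w - \tfrac{1}{2})\log|w| - \text{Re}\,w - \text{Im}(w)\,\text{Arg}\,w + \tfrac{1}{2}\log 2\pi + O(|w|^{-1})$.

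Applying this with $w_{\pm} = \alpha\pm\beta z$, I would observe that $|w_{\pm}|=|\beta z|(1+O(|z|^{-1}))$, that the real parts sum to $2\alpha$, and that, in the principal branch, $\text{Arg}\,w_{+}\to\text{Arg}\,z$ while $\text{Arg}\,w_{-}\to\text{Arg}\,z - \pi\,\text{sgn}(\text{Im}\,z)$ as $|z|\to\infty$, since $-z$ must be rotated by $\mp\pi$ to stay off the negative real axis. Summing the two Stirling expressions, the algebraic contribution is $(2\alpha-1)\log|\beta z| - 2\alpha + \log(2\pi)$, and the decisive exponential contribution is
\begin{equation*}
-\text{Im}(w_{+})\,\text{Arg}\,w_{+} - \text{Im}(w_{-})\,\text{Arg}\,w_{-} = -\beta\,\text{Im}(z)\bigl(\text{Arg}\,w_{+} - \text{Arg}\,w_{-}\bigr) \longrightarrow -\beta\pi|\text{Im}(z)|.
\end{equation*}
Exponentiating yields $|\Gamma(\alpha+\beta z)\Gamma(\alpha-\beta z)|\sim 2\pi e^{-2\alpha}|\beta z|^{2\alpha-1}e^{-\beta\pi|\text{Im}(z)|}$; reciprocating and absorbing the $O(|z|^{-1})$ correction into a constant $C$ gives the stated inequality, which in the upper half plane reduces to the form written in the lemma. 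Because $|\text{Im}(z)|\le|z|$ and $|\beta z|^{-(2\alpha-1)}$ grows only algebraically, the bound immediately implies $1/|\Gamma(\alpha+\beta z)\Gamma(\alpha-\beta z)|\le e^{(\beta\pi+\epsilon)|z|}$ for any $\epsilon>0$ and sufficiently large $|z|$, which is precisely the statement that the function is of exponential type $\beta\pi$.

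The main obstacle will be the restriction $|\text{Arg}\,w|<\pi-\delta$ imposed by Stirling, which must be verified for both $\alpha+\beta z$ and $\alpha-\beta z$ simultaneously. When $z$ is close to the positive real axis, $\alpha-\beta z$ sits near the negative real axis, where the form of Stirling I used is not directly applicable even though the reciprocal is perfectly entire there. I would handle this by first proving the estimate in the two open half-planes $0<\text{Arg}\,z<\pi$ and $-\pi<\text{Arg}\,z<0$, where both $\alpha\pm\beta z$ are uniformly bounded away from the cut, and then extending to the full sector $|\text{Arg}\,z|<\pi$ by continuity using that $1/(\Gamma(\alpha+\beta z)\Gamma(\alpha-\beta z))$ is entire. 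An alternative route I would keep in reserve is the reflection formula $\Gamma(\alpha-\beta z)=\pi/[\sin(\pi(\alpha-\beta z))\,\Gamma(1-\alpha+\beta z)]$, which rewrites the product so that Stirling applies uniformly to $\Gamma(\alpha+\beta z)$ and $\Gamma(1-\alpha+\beta z)$, while the $\sin$-factor directly supplies the required $e^{-\beta\pi|\text{Im}(z)|}$ and so makes the exponential type completely transparent.
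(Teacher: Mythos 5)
Your proposal follows essentially the same route as the paper's proof --- Stirling's expansion applied to each gamma factor, with the limiting difference $\text{Arg}(\alpha+\beta z)-\text{Arg}(\alpha-\beta z)\to\pm\pi$ supplying the exponential factor --- and it is correct. You are in fact more careful than the paper on two points it elides: the branch bookkeeping (the general bound involves $e^{\beta\pi|\text{Im}(z)|}$, reducing to the stated form only in the upper half-plane), and the failure of Stirling's expansion for $\alpha-\beta z$ near the negative real axis when $z$ approaches the positive real axis, which you repair by a continuity argument or by the reflection formula.
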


\begin{proof}
To show that this function is an exponential type function, we start with the asymptotic expansion of the gamma function given by $\Gamma(z) = e^{-z} \, z^z \, (2 \pi /z)^{1/2} (1+ o(1))$ as $|z| \to \infty$ in the sector $|\text{Arg} \, z| \leq (\pi - \delta) (<\pi)$ \cite[p. 140, 5.11.3]{olver2010nist}, then we have
\begin{equation}\label{eqn:Gammareciprocal}
    \frac{1}{\Gamma(\alpha + \beta z)} = \, \frac{1}{\sqrt{2 \pi}} \, \text{exp} \left[ (\alpha + \beta z) - \left(\alpha + \beta z - \frac{1}{2} \right) \, \ln{(\alpha + \beta z)} + o(1) \right]
\end{equation}
for any number $\alpha$ and $\beta$ for $|z| \to \infty$ in the sector $|\text{Arg} \, (\alpha + \beta z)| < \pi$. For positive $\alpha$ and $\beta$, the reciprocal of the product of the gamma functions holds in the upper and lower half-planes for sufficiently large $z$.

By expressing the logarithmic function using its principal value for a complex number, the following estimate can be established
\begin{align}
    \left(\alpha + \beta z - \frac{1}{2} \right) \, \ln{(\alpha + \beta z)} 
    = \, \ln{|\beta z|^{\alpha - \frac{1}{2}}} + \beta z \, \ln{|\beta z|} + i \, \beta z \, \text{Arg} \, (\alpha + \beta z) + \bigO(1),
\end{align}
as $|z| \to \infty$, which is to be substituted back into equation \eqref{eqn:Gammareciprocal} so we have now
\begin{align}
    \frac{1}{\Gamma(\alpha + \beta z)} = \, \frac{1}{\sqrt{2 \pi}} \, \text{exp} \left[ (\alpha + \beta z) - \ln{|\beta z|^{\alpha - \frac{1}{2}}} + \beta z \, \ln{|\beta z|} + i \, \beta z \, \text{Arg} \, (\alpha + \beta z) + \bigO(1) \right].
\end{align}
The resulting estimate holds as well for substituting $z$ with $-z$. 
Hence, we have the asymptotic behavior
\begin{align}
    \frac{1}{|\Gamma (\alpha + \beta z) \Gamma (\alpha - \beta z)|} & = \, \frac{e^{2 \alpha}}{2\pi \, |\beta z|^{2 \alpha -1}} \, \text{exp} \left[ i \, \beta z \, \{ \text{Arg} \, (\alpha + \beta z) - \text{Arg} \, (\alpha - \beta z) \} + \bigO(1) \right]. 
    \end{align}
Further, the difference of $\text{Arg} \, (\alpha + \beta z) - \text{Arg} \, (\alpha - \beta z) $ is equal to $ -\pi$ so that we arrive with
\begin{align} 
\frac{1}{|\Gamma (\alpha + \beta z) \Gamma (\alpha - \beta z)|} = \, \frac{e^{2 \alpha}}{2\pi \, |\beta z|^{2 \alpha -1}} \, \text{exp} \left[ -i \, \beta \pi z \, + \bigO(1) \right], \quad |z| \to \infty .
\end{align}

Then for sufficiently large $|z|$, there exists a number $C>0$ such that the inequality in equation \eqref{eqn:Gammalemma} holds. This establishes that the reciprocal of the product of the gamma functions is an exponential function of type $\beta \pi$.
\end{proof}

Let us now apply the established theorem from the previous section to the exponential type function in Lemma 1.

\begin{proposition}
For $m=0,1,2,\dots$ and non-negative $l\geq m+1$, we have
\begin{align}\label{eqn:theoremGamma1}
    \int_0^\infty \, \frac{x^{2m+1} \, J_{2l} (\lambda x)}{\prod_{k=1}^M \, \Gamma(\alpha_k + \beta_k x) \, \Gamma(\alpha_k - \beta_k x)} \, & \, \text{d}x \, = \, \frac{1}{2} \, \sum_{s=0}^{l-m-1} \, \frac{a_s^{(M)}}{(2s)!} \, \frac{(l+m+s)!}{(l-m-s-1)!} \, \left( \frac{2}{\lambda} \right)^{2(m+s+1)},
\end{align}
where $\alpha_k,\beta_k>0$ for $k=1,2,\dots, M<\infty$, and
\begin{align}\label{eqn:GammaCoeff}
    a_s^{(M)} = \frac{d^{2s}}{dx^{2s}} \, \frac{1}{\prod_{k=1}^M \, \Gamma(\alpha_k + \beta_k \, x) \, \Gamma(\alpha_k - \beta_k \, x)} \bigg|_{x=0},
\end{align}
for $s=0,1,2,\dots$, and the following restrictions hold
\begin{align} \label{eqn:Alpharestriction1}
    \sum_{k=1}^M \, \alpha_k > m+\frac{1}{2} \, & \left( M + \frac{3}{2}  \right), \\ \label{eqn:Lambdarestriction}
    \lambda \, \geq \, \pi \, \sum_{k=1}^M \, \beta_k. &
\end{align}
Moreover, under the same restrictions \eqref{eqn:Alpharestriction1}, \eqref{eqn:Lambdarestriction} and for $l<m+1$, we have
\begin{align}
    \label{eqn:theoremGamma1zero}
    \int_0^\infty \, \frac{x^{2m+1} \, J_{2l} (\lambda x)}{\prod_{k=1}^M \, \Gamma(\alpha_k + \beta_k x) \, \Gamma(\alpha_k - \beta_k x)} \, & \, \text{d}x \, = \, 0.
\end{align}
\end{proposition}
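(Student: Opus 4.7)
The plan is to realize the integrand as $f(x)\,J_{2l}(\lambda x)$ with
\[
f(z) \; := \; \frac{z^{2m+1}}{\prod_{k=1}^M \Gamma(\alpha_k + \beta_k z)\,\Gamma(\alpha_k - \beta_k z)},
\]
verify that this $f$ satisfies every hypothesis of Theorem \ref{theorem1}, and then read the conclusion directly. The verification splits into four items: entireness, exponential type, the odd-in-$z$ expansion \eqref{eqn:theorem1function}, and absolute integrability on $[0,\infty)$.

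Entireness is immediate since $1/\Gamma$ is entire and $f$ is a finite product of entire functions multiplied by a polynomial. The exponential type is controlled by iterating Lemma 1: each factor $1/[\Gamma(\alpha_k + \beta_k z)\Gamma(\alpha_k - \beta_k z)]$ is shown there to have type $\beta_k \pi$, so their product has type at most $\pi \sum_{k=1}^M \beta_k$ by subadditivity of the type under multiplication, and the polynomial prefactor $z^{2m+1}$ does not alter this. Hence $\tau = \pi \sum_k \beta_k$, and the hypothesis $\lambda > \tau$ of Theorem \ref{theorem1} is exactly the bound \eqref{eqn:Lambdarestriction}. The power-series structure follows from the evenness of each factor under $z\mapsto -z$, which makes $\prod_k 1/[\Gamma(\alpha_k + \beta_k z)\Gamma(\alpha_k - \beta_k z)]$ an even entire function; multiplying by $z^{2m+1}$ yields the required form $f(z) = z^{2m+1}\sum_{j=0}^{\infty} a_j z^{2j}$ with $a_0 = 1/\prod_k \Gamma(\alpha_k)^2 \neq 0$ and Taylor coefficients $a_j = a_j^{(M)}/(2j)!$, the $(2j)!$ being precisely the origin of the factorial in the denominator of \eqref{eqn:theoremGamma1}.

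Absolute integrability of $f(x)$ on $[0,\infty)$, and more importantly finiteness of the auxiliary integral $\int_{-\infty}^\infty |f(x)|/(x^2+\gamma^2)^{1/4}\,\mbox{d}x$ that appears inside the $EF$-bound in the proof of Theorem \ref{theorem1}, is obtained from Lemma 1 restricted to the real line (where the exponential factor is absent). There $|f(x)|$ decays like $x^{2m+1+M-2\sum_k \alpha_k}$, and the convergence of the auxiliary integral at infinity is equivalent to $2m + 1 + M - 2\sum_k \alpha_k - 1/2 < -1$, which is exactly the restriction \eqref{eqn:Alpharestriction1}. With all hypotheses now in place, Theorem \ref{theorem1} delivers the zero statement \eqref{eqn:theoremGamma1zero} for $l < m+1$ and the terminating polynomial \eqref{eqn:theoremGamma1} for $l \geq m+1$ after substituting $a_j = a_j^{(M)}/(2j)!$ into \eqref{eqn:theorem1IntSum}.

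The main obstacle I anticipate is the borderline case $\lambda = \pi \sum_k \beta_k$, since the proposition writes $\lambda \geq \pi \sum_k \beta_k$ in \eqref{eqn:Lambdarestriction} whereas Theorem \ref{theorem1} requires the strict inequality $\lambda > \tau$. I would handle this either by taking the limit $\lambda \to \tau^{+}$ in the finite-sum right-hand side \eqref{eqn:theoremGamma1}, which is manifestly continuous in $\lambda$, or by revisiting the $EF$ estimate directly with $\lambda = \tau$: there the exponential factor $e^{-\gamma(\lambda-\tau)}$ collapses to $1$, and the required vanishing as $\gamma \to \infty$ must come from the algebraic decay secured by \eqref{eqn:Alpharestriction1} together with $(x^2+\gamma^2)^{-1/4} \to 0$ uniformly on compact sets, an estimate that the margin in \eqref{eqn:Alpharestriction1} is built to furnish.
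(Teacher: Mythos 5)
Your proposal is correct and follows essentially the same route as the paper: identify $f(z)=z^{2m+1}\prod_{k}\left[\Gamma(\alpha_k+\beta_k z)\Gamma(\alpha_k-\beta_k z)\right]^{-1}$, invoke Lemma 1 to get type $\pi\sum_k\beta_k$, read off $a_j=a_j^{(M)}/(2j)!$ from the even Taylor expansion, derive \eqref{eqn:Alpharestriction1} from the algebraic decay needed in the contour estimates, and apply Theorem \ref{theorem1}. Your explicit treatment of the borderline case $\lambda=\pi\sum_k\beta_k$ (where Theorem \ref{theorem1} strictly requires $\lambda>\tau$) is a point the paper passes over in silence, and is a worthwhile addition.
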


\begin{proof}
We apply Theorem 1 where $f(x)$ is given by
\begin{align}\label{eqn:GammaFnOdd}
    f(x) = \frac{x^{2m+1}}{\prod_{k=1}^M \, \Gamma(\alpha_k + \beta_k x) \, \Gamma(\alpha_k - \beta_k x)}.
\end{align}
Applying Lemma 1 for $f(x)$, we see that this is an exponential function of type $\pi \, \sum_{k=1}^M \, \beta_k$. The series form of $f(x)$ can be obtained by taking the Taylor series about $x=0$ of the denominator which reveals that only even powers contribute to its series expansion so that
\begin{align}\label{eqn:TwoGammaSeries}
    \frac{1}{\prod_{k=1}^M \, \Gamma(\alpha_k + \beta_k x) \, \Gamma(\alpha_k - \beta_k x)} = \sum_{s=0}^\infty
    \, \frac{1}{(2s)!} \, a_s^{(M)} \, x^{2s},
\end{align}
where $a_s^{(M)}$ is precisely given by \eqref{eqn:GammaCoeff}.

The restrictions in \eqref{eqn:Alpharestriction1} and \eqref{eqn:Lambdarestriction} are obtained by performing the contour integration for the integral in equation \eqref{eqn:theoremGamma1}, following the contour path in Figure 1. Estimating the integral along the path-$DE$ using the inequality in equation \eqref{eqn:HankelBound} and Lemma 1, the upper bound of the integral can be written as
\begin{align} \label{eqn:IntGammaDEbound}
    \Bigg| \int_0^{\gamma} \, \frac{(L+iy)^{2m+1} \, H_{2l}^{(1)} (\lambda (L+iy))}{\prod_{k=1}^M \, \Gamma(\alpha_k + \beta_k (L+iy)) \, \Gamma(\alpha_k - \beta_k (L+iy))} \, i \text{d}y \Bigg| \, & \leq \, \left( \prod_{k=1}^M \, \frac{C e^{2 \lambda_k}}{2 \pi \beta_k^{2 \alpha_k -1}} \right) \, \sqrt{\frac{2}{\pi \lambda}}  \nonumber \\
    & \quad \times \, \,  e^{-\gamma (\lambda -  \pi \sum_{k=1}^M \beta_k} )\, \int_0^\gamma \frac{1}{|L+iy|^{2 \sum_{k=1}^m \alpha_k -2m - \frac{3}{2}} } \text{d}y.
\end{align}
In the last line of the equation above, this becomes zero for sufficiently large $\gamma$ provided that the restrictions  \eqref{eqn:Alpharestriction1} and \eqref{eqn:Lambdarestriction} holds, in accordance to Theorem 1.

Thus, it follows from Theorem 1 that equation \eqref{eqn:theoremGamma1} holds true for $l\geq m+1$ as well as equation \eqref{eqn:theoremGamma1zero} for $l < m+1$. The proposition is proved.
\end{proof}

\begin{proposition}
For $m=0,1,2,\dots$ and non-negative $l\geq m$, we have
\begin{align}\label{eqn:theoremGamma2}
    \int_0^\infty \, \frac{x^{2m} \, J_{2l+1} (\lambda x)}{\prod_{k=1}^M \, \Gamma(\alpha_k + \beta_k x) \, \Gamma(\alpha_k - \beta_k x)} \, & \, \text{d}x \, = \, \frac{1}{2} \, \sum_{s=0}^{l-m} \, \frac{a_s^{(M)}}{(2s)!} \, \frac{(l+m+s)!}{(l-m-s)!} \, \left( \frac{2}{\lambda} \right)^{2m+2s+1},
\end{align}
where and $\alpha_k,\beta_k>0$ for $k=1,2,\dots, M<\infty$ and $a_s^{(M)}$ is given by \eqref{eqn:GammaCoeff}, and the restrictions 
\begin{align} \label{eqn:Alpharestriction2}
    \sum_{k=1}^M \, \alpha_k > m+\frac{1}{2} \, & \left( M + \frac{1}{2}  \right), \\
    \label{eqn:Lambdarestriction1}
    \lambda \, \geq \, \pi \, \sum_{k=1}^M \, \beta_k. &
\end{align}
are satisfied.
Moreover, under the same restrictions \eqref{eqn:Alpharestriction2}, \eqref{eqn:Lambdarestriction1} and for $l<m$, we have
\begin{align}
    \label{eqn:theoremGamma2zero}
   \int_0^\infty \, \frac{x^{2m} \, J_{2l+1} (\lambda x)}{\prod_{k=1}^M \, \Gamma(\alpha_k + \beta_k x) \, \Gamma(\alpha_k - \beta_k x)} \, & \, \text{d}x \, = \, 0.
\end{align}
\end{proposition}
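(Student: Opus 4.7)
The plan is to reduce Proposition 2 directly to Theorem 2 applied with $h(z) = z^{2m}\big/\prod_{k=1}^M \Gamma(\alpha_k + \beta_k z)\Gamma(\alpha_k - \beta_k z)$. The proof mirrors that of Proposition 1 almost verbatim, with adjustments only for the even power $z^{2m}$ replacing $z^{2m+1}$ and the odd-order Bessel function $J_{2l+1}$ replacing $J_{2l}$.

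First I would verify the hypotheses of Theorem 2. By Lemma 1 applied factor-by-factor, $1/\prod_{k=1}^M \Gamma(\alpha_k + \beta_k z)\Gamma(\alpha_k - \beta_k z)$ is entire of exponential type $\tau = \pi \sum_k \beta_k$, and multiplication by $z^{2m}$ preserves both entire-ness and the type. Since each factor $\Gamma(\alpha_k + \beta_k z)\Gamma(\alpha_k - \beta_k z)$ is invariant under $z \mapsto -z$, the Taylor series of its reciprocal contains only even powers, so $h(z) = z^{2m}\sum_{s=0}^\infty \tfrac{a_s^{(M)}}{(2s)!} z^{2s}$, matching the form \eqref{eqn:Theorem3ExpFn} of Theorem 2 with $b_s = a_s^{(M)}/(2s)!$ and $b_0 = 1/\prod_k \Gamma(\alpha_k)^2 \neq 0$. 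The hypothesis $\lambda > \tau$ of Theorem 2 becomes precisely \eqref{eqn:Lambdarestriction1}, and substituting these $b_s$ into \eqref{eqn:theorem2IntSum} yields the right-hand side of \eqref{eqn:theoremGamma2}.

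The only piece of the contour argument that has to be re-verified for this specific integrand is the path-$DE$ estimate, since Lemma 1 is used to dominate $|h(L+iy)|$ on the vertical path. Combining the Hankel bound $|H_{2l+1}^{(1)}(\lambda z)| \leq \sqrt{2/(\pi \lambda)}$ on the upper half-plane with Lemma 1, the DE-integrand is dominated up to constants by a power of $|L+iy|$ times $e^{-\gamma(\lambda - \pi \sum_k \beta_k)}$. The requirement that this power be strictly negative, so that the bound vanishes as $L \to \infty$ with $\gamma$ fixed and then $\gamma \to \infty$, is exactly \eqref{eqn:Alpharestriction2}; the drop by $1/2$ relative to Proposition 1's restriction \eqref{eqn:Alpharestriction1} reflects the single-power reduction from $z^{2m+1}$ to $z^{2m}$. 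Path-$FA$ is handled symmetrically via Lemma 3.2 of \cite{schmeisser2007approximation}, and path-$EF$ decays under \eqref{eqn:Lambdarestriction1}.

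The main obstacle is the bookkeeping of exponents in the path-$DE$ estimate that leads to the precise form of \eqref{eqn:Alpharestriction2}; everything else is essentially a transcription of Case 2 of the theorem into the gamma-function setting, with Theorem 2 then producing both \eqref{eqn:theoremGamma2} for $l \geq m$ via \eqref{eqn:theorem2IntSum} and \eqref{eqn:theoremGamma2zero} for $l < m$ via \eqref{eqn:Theorem2Int}.
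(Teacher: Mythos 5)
Your proposal is correct and follows essentially the same route as the paper's own proof: apply Theorem 2 to $h(x)=x^{2m}\big/\prod_{k}\Gamma(\alpha_k+\beta_k x)\Gamma(\alpha_k-\beta_k x)$, invoke Lemma 1 for the exponential type $\pi\sum_k\beta_k$, read off $b_s=a_s^{(M)}/(2s)!$ from the even Taylor expansion, and re-derive the path-$DE$ bound to obtain the restriction \eqref{eqn:Alpharestriction2}. No substantive differences from the paper's argument.
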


\begin{proof}
We also apply Theorem 2 where $h(x)$ is given by
\begin{align}\label{eqn:GammaFnEven}
    h(x) \, = \, \frac{x^{2m}}{\prod_{k=1}^M \, \Gamma(\alpha_k + \beta_k x) \, \Gamma(\alpha_k - \beta_k x)}.
\end{align}
By applying Lemma 1, $h(x)$ is now an exponential function of type $\pi \, \sum_{k=1}^M \, \beta_k$. Its series form is obtained by using equation \eqref{eqn:TwoGammaSeries}.

The restrictions \eqref{eqn:Alpharestriction2} and \eqref{eqn:Lambdarestriction1} can be shown by performing the contour integration for the integral in \eqref{eqn:theoremGamma2} using the contour in Figure 1. The same estimate from Theorem 2 is also done for the path along $DE$ where it has the upper bound
\begin{align}
    \Bigg| \int_0^{\gamma} \, \frac{(L+iy)^{2m} \, H_{2l+1}^{(1)} (\lambda (L+iy))}{\prod_{k=1}^M \, \Gamma(\alpha_k + \beta_k (L+iy)) \, \Gamma(\alpha_k - \beta_k (L+iy))} \, i \text{d}y \Bigg| \, & \leq \, \left( \prod_{k=1}^M \, \frac{C e^{2 \lambda_k}}{2 \pi \beta_k^{2 \alpha_k -1}} \right) \, \sqrt{\frac{2}{\pi \lambda}}  \nonumber \\
    & \quad  \times \, \,  e^{-\gamma (\lambda -  \pi \sum_{k=1}^M \beta_k} )\, \int_0^\gamma \frac{1}{|L+iy|^{2 \sum_{k=1}^m \alpha_k -2m - \frac{1}{2}} } \text{d}y.
\end{align}
In the right hand side of the equation above, it can be seen on the last line this becomes zero for sufficiently large $\gamma$ provided that the conditions in \eqref{eqn:Alpharestriction2} and \eqref{eqn:Lambdarestriction1} are satisfied.

It also follows from Theorem 2 that equation \eqref{eqn:theoremGamma2} holds for $l<m$ as well as equation \eqref{eqn:theoremGamma2zero} for $l \geq m$. Thus, the proposition is proved.
\end{proof}

The coefficients $a_s^{(M)}$'s, except for $a_0^{(M)}$, are in terms of the derivatives of the digamma function $\psi(z)=\Gamma'(z) / \Gamma(z)$. Direct differentiation and substitution give the following first few coefficients:
\begin{align}
a_0^{(M)} \, = & \, \frac{1}{\prod_{k=1}^M \, \left( \Gamma(\alpha_k) \right)^2}, \quad M=1,2,3,\dots, \\
a_1^{(M)} \, = & \, -\frac{2}{\prod_{k=1}^M \, \left( \Gamma(\alpha_k) \right)^2} \sum_{k=1}^M \, \beta_k^2 \, \psi^{(1)}(\alpha_k), \quad M=1,2,3,\dots, \\
a_2^{(1)} \, = & \, \frac{\beta_1^4}{\left(\Gamma(\alpha_1) \right)^2} \, \left[ 12 \left( \psi^{(1)}(\alpha_1) \right)^2 - 2 \psi^{(3)}(\alpha_1) \right], \\
a_2^{(2)} \, = & \, \frac{\beta_1^2 \, \beta_2^2}{ \left(\Gamma(\alpha_1) \right)^2 \, \left( \Gamma(\alpha_2) \right)^2} \, 24 \, \left(\psi^{(1)}(\alpha_1)\right)^2 \, \left(\psi^{(1)}(\alpha_2)\right)^2 \, + \, \frac{\beta_1^4}{ \left(\Gamma(\alpha_1) \right)^2 \, \left( \Gamma(\alpha_2) \right)^2} \, \left[ 12 \left( \psi^{(1)}(\alpha_1) \right)^2 - 2 \psi^{(3)}(\alpha_1) \right] \nonumber \\
& \qquad + \, \frac{\beta_2^4}{ \left(\Gamma(\alpha_1) \right)^2 \, \left( \Gamma(\alpha_2) \right)^2} \, \left[ 12 \left( \psi^{(1)}(\alpha_2) \right)^2 - 2 \psi^{(3)}(\alpha_2) \right], \\
a_2^{(M)} \, = & \, \frac{1}{\prod_{k=1}^M \, \left(\Gamma(\alpha_k) \right)^2} \, \sum_{l=1}^M \, \beta_l^4 \, \left[ 12 \left( \psi^{(1)}(\alpha_l) \right)^2 - 2 \psi^{(3)}(\alpha_l) \right] \nonumber \\
& \quad + \, \frac{24}{\prod_{k=1}^M \, \left(\Gamma(\alpha_k) \right)^2} \, \sum_{l=1}^{M-1} \, \beta_l^2 \, \psi^{(1)}(\alpha_l) \, \sum_{l'>l}^{M} \, \beta_{l'}^2 \, \psi^{(1)}(\alpha_{l'}), \quad M=3,4,\dots,
\end{align}
where $\psi^{(n)}(z)$ is the $n$-th derivative of $\psi(z)$, also known as the $n$-th polygamma function.

Using these coefficients, special cases of integrals \eqref{eqn:theoremGamma1} and \eqref{eqn:theoremGamma2} can be explicitly evaluated. For example, under the restrictions given by equations \eqref{eqn:Alpharestriction1} and \eqref{eqn:Lambdarestriction}
in equation \eqref{eqn:theoremGamma1} for $l=m+1$ and $l=m+2$, we have the integrals respectively
\begin{align} \label{eqn:theorem4n1}
   \int_0^\infty \, \frac{x^{2m+1} \, J_{2m+2} (\lambda x)}{\prod_{k=1}^M \, \Gamma(\alpha_k + \beta_k x) \, \Gamma(\alpha_k - \beta_k x)} \, \text{d}x \, = & \, \frac{(2m+1)!}{2 \, \prod_{k=1}^M \, \left( \Gamma(\alpha_k) \right)^2} \, \left( \frac{2}{\lambda} \right)^{2(m+1)}, \\ \label{eqn:theorem4n2}
   \int_0^\infty \, \frac{x^{2m+1} \, J_{2m+4} (\lambda x)}{\prod_{k=1}^M \, \Gamma(\alpha_k + \beta_k x) \, \Gamma(\alpha_k - \beta_k x)} \, \text{d}x \, = & \, \frac{(2m+2)!}{2 \, \prod_{k=1}^M \, \left( \Gamma(\alpha_k) \right)^2} \, \left( \frac{2}{\lambda} \right)^{2(m+1)} \left[ 1 \, - \, \frac{4 (2m+3)}{\lambda^2} \, \sum_{k=1}^M \, \beta_k^2 \, \psi^{(1)} (\alpha_k) \right],
\end{align}
for all $m=0,1,2,\dots$. Also under the restrictions given by equations \eqref{eqn:Lambdarestriction} and \eqref{eqn:Alpharestriction2} in equation \eqref{eqn:theoremGamma2} for $l=m$ and $l=m+1$, we have the integrals
\begin{align} \label{eqn:theorem5r0}
   \int_0^\infty \, \frac{x^{2m} \, J_{2m+1} (\lambda x)}{\prod_{k=1}^M \, \Gamma(\alpha_k + \beta_k x) \, \Gamma(\alpha_k - \beta_k x)} \, \text{d}x \, = & \, \frac{(2m)!}{2 \, \prod_{k=1}^M \, \left( \Gamma(\alpha_k) \right)^2} \, \left( \frac{2}{\lambda} \right)^{2m+1}, \\ \label{eqn:theorem5r1}
   \int_0^\infty \, \frac{x^{2m} \, J_{2m+3} (\lambda x)}{\prod_{k=1}^M \, \Gamma(\alpha_k + \beta_k x) \, \Gamma(\alpha_k - \beta_k x)} \, \text{d}x \, = & \, \frac{(2m+1)!}{2 \, \prod_{k=1}^M \, \left( \Gamma(\alpha_k) \right)^2} \, \left( \frac{2}{\lambda} \right)^{2m+1}  \left[ 1 \, - \, \frac{8 (m+1)}{\lambda^2} \, \sum_{k=1}^M \, \beta_k^2 \, \psi^{(1)} (\alpha_k) \right],
\end{align}
for all $m=0,1,2,\dots$. We also have the following special cases for $m=0$ and $\beta_k=1$ and $\alpha_k=1,2$ respectively,
\begin{align}\label{eqn:GammaSpeCase}
    \int_0^\infty \, \frac{J_1 \, (\pi \, x)}{\Gamma(1+x) \, \Gamma(1-x)} \, \text{d}x \, = \, \, \frac{1}{\pi} \, , \qquad    \int_0^\infty \, \frac{x \, J_2 \, (\pi \, x)}{\Gamma(2+x) \, \Gamma(2-x)} \, \text{d}x \, = \, \, \frac{2}{\pi^2}.
\end{align}

Further, we can obtain identities for the value of $1/\pi$ by considering $M=1$ and $\beta_1=1, \alpha_1= m+n+1/2$ in equations \eqref{eqn:theorem5r0} and \eqref{eqn:theorem4n1} respectively, we have the following 
\begin{align}
    \frac{1}{\pi} =  & \frac{\left[ \prod_{k=1}^{m+n} (2k-1) \right]^2 \, \lambda^{2m+1}}{(2m)! \, 2^{4m+2n}} \int_0^{\infty} \frac{x^{2m+1} \, J_{2m+1} (\lambda x)}{\Gamma(m+n+\frac{1}{2}+x) \Gamma(m+n+\frac{1}{2}-x)}, \\
    \frac{1}{\pi} = &  \frac{\left[ \prod_{k=1}^{m+n} (2k-1) \right]^2 \, \lambda^{2m+2}}{(2m+1)! \, 2^{4m+2n+1}} \int_0^{\infty} \frac{x^{2m+1} \, J_{2m+2} (\lambda x)}{\Gamma(m+n+\frac{1}{2}+x) \Gamma(m+n+\frac{1}{2}-x)},
\end{align}
for $m=0,1,2,\dots$, $n=1,2,3,\dots$, and $\lambda > \pi$.

\end{document}